\newtheorem{theorem}{Theorem}[section]
\newtheorem{prop}[theorem]{Proposition}
\theoremstyle{definition}
\theoremstyle{remark}
\numberwithin{equation}{section}
\newcommand\nutwid{\overset {\text{\lower 3pt\hbox{$\sim$}}}\nu}
\newcommand\omycite[1]{}
\newcommand{\beqs}{\begin{equation*}}
\newcommand{\eeqs}{\end{equation*}}
\newcommand{\beq}{\begin{equation}}
\newcommand{\eeq}{\end{equation}}
\renewcommand{\MR}[1]{\href{http://www.ams.org/mathscinet-getitem?mr={#1}}{MR{#1}}}
\begin{document}
\title[$q$-ultraspherical polynomials]{A conjecture of Gasper on $q$-ultraspherical polynomials}

\author{Dandan Chen}
\address{Department of Mathematics, Shanghai University, People's Republic of China}
\address{Newtouch Center for Mathematics of Shanghai University, Shanghai, People's Republic of China}
\email{mathcdd@shu.edu.cn}
\author{Siyu Yin}
\address{Department of Mathematics, Shanghai University, People's Republic of China}
\email{siyuyin0113@126.com}


\subjclass[2010]{05A30, 33D05, 33D15, 33D45, 42C05}

\date{}


\keywords{$q$-beta integral, $q$-orthogonal functions, $q$-ultraspherical polynomials, Askey-Wilson polynomials.}

\begin{abstract}
In this paper we  establish  $q$-orthogonality relation  for the continuous $q$-ultraspherical polynomials, which was considered by Gasper. Additionally, we evaluate a new $q$-beta integral with several parameters.
\end{abstract}

\maketitle

\section{Introduction}

L.J. Rogers introduced an extension of the ultraspherical polynomials $C_n^{\lambda}(x)$ that he used to establish the Rogers--Ramanujan identities \cite{Rogers-PLM-95}. The Rogers polynomials $C_n(x;\beta|q)$, called the continuous $q$-ultraspherical polynomials, contain the polynomials of Geronimus and are generated by the recurrence relation
\begin{align*}
(1-q^{n+1})C_{n+1}(x;\beta|q)=2x(1-\beta q^n)C_n(x;\beta|q)-(1-{\beta}^2q^{n-1})C_{n-1}(x;\beta|q),
\end{align*}
where $n\textgreater 0$, $-1\textless q\textless 1$ and
\begin{align*}
C_0(x;\beta|q)=1,\qquad C_1(x;\beta|q)=2x(1-\beta)/(1-q).
\end{align*}
When $q\rightarrow1^{-}$, $C_n(x;q^{\lambda}|q)\rightarrow C_n^{\lambda}(x)$. Rogers gave some essential properties of these polynomials but he nevertheless did not give their orthogonality. The orthogonality of these polynomials were first proved by R. Askey and M. E. H. Ismail \cite{Askey-Ismail-80} in 1980. Gasper \cite{Gasper-CM-81} showed that the more general class of functions $C_k^{(\alpha,\beta)}(e^{i\theta})$ satisfying the orthogonality relation. He also explored the $q$-(basic) analogs of $C_k^{(\alpha,\beta)}(e^{i\theta})$ which contained the continuous $q$-ultraspherical polynomials in \cite{Chen-Liu-printed} as a special case and outlined the conditions for their orthogonality. Furthermore, he thought about the more general functions $C_k^{(\alpha,\beta,\gamma)}(e^{i\theta};q)$ and $\omega^{(\alpha,\beta,\gamma)}(\theta;q)$ defined as
\begin{align*}
\sum_{k=0}^{\infty}C_k^{(\alpha,\beta,\gamma)}(e^{i\theta};q)t^k=\frac{(\alpha e^{-i\theta}t;q)_{\infty}(\beta e^{i\theta}t;q)_{\infty}}{(e^{-i\theta}t;q)_{\infty}(\gamma e^{i\theta}t;q)_{\infty}},
\end{align*}
and
\begin{align*}
\omega^{(\alpha,\beta,\gamma)}(\cos\theta;q)=\frac{(e^{-2i\theta};q)_{\infty}(\gamma e^{2i\theta};q)_{\infty}}{(\alpha e^{-2i\theta};q)_{\infty}(\beta e^{2i\theta};q)_{\infty}}.
\end{align*}
Here and later, we adopt the standard $q$-series notations
\begin{align*}
(a;q)_0:=1, \quad (a;q)_n:=\prod_{k=0}^{n-1}(1-aq^k), \quad (a;q)_\infty:=\prod_{k=0}^\infty(1-aq^k)
\end{align*}
for $|q|\textless1$ and $n \in \mathbb{N}$. Unfortunately Gasper couldn't get the similar orthogonal equation as \cite[Eq. (3.6)]{Gasper-CM-81} without additional restrictions on the parameters.

In 2006, M. E. H. Ismail and D. W. Stanton evaluated some Ramanujan continued fractions by using asymptotics of polynomials orthogonal with respect to measures with absolutely continuous components in \cite{Ismial-06}. Recently, the first author and Liu \cite{Chen-Liu-printed} gave a new proof of $q$-orthogonality relation for the continuous $q$-ultraspherical polynomials and evaluated a new $q$-beta integral with five parameters.

We consider the more general situation of $q$-orthogonal functions $C_n^{(\alpha,\beta,\gamma,\delta)}(e^{i\theta};q)$, which is defined as
\begin{align}\label{defn-cq-4}
\sum_{n=0}^{\infty}C_n^{(\alpha,\beta,\gamma,\delta)}(e^{i\theta};q)t^n
=\frac{(\alpha te^{i\theta},\beta te^{-i\theta};q)_\infty}
{(\gamma te^{i\theta}, \delta te^{-i\theta};q)_\infty},
\end{align}
where  $\alpha, \beta, \gamma, \delta \in \mathbb{C}$.
And we establish the orthogonality relation below for the $q$-functions $C_n^{(\alpha,\beta,\gamma,\delta)}(e^{i\theta};q)$.
\begin{theorem}\label{thm-5-4}
For $\max\{|q|,|\frac{\alpha}{\gamma}|,|\frac{\beta}{\delta}|\}<1$, we have
\begin{align}\label{eq-cq-orth-abcd}
&\int_{0}^{2\pi}C_m^{(\alpha,\beta,\gamma,\delta)}(e^{i\theta};q)C_n^{(\alpha,\beta,\gamma,\delta)}(e^{i\theta};q)
\omega^{(\alpha,\beta,\gamma,\delta)}(\cos\theta|q)d\theta\\
=&\frac{2\pi(\frac{\alpha}{\gamma},\frac{\beta}{\delta};q)_\infty}{(q,\frac{\alpha\beta}{\gamma\delta};q)_\infty}
\left(\frac{1}{1-\frac{\alpha q^n}{\gamma}}+\frac{1}{1-\frac{\beta q^n}{\delta}}\right)\frac{(\frac{\alpha\beta}{\gamma\delta};q)_n(\gamma\delta)^n}{(q;q)_n}\delta_{m,n},\nonumber
\end{align}
where
\begin{align}\label{w-4}
\omega^{(\alpha,\beta,\gamma,\delta)}(\cos\theta|q)=\frac{(\frac{\gamma}{\delta}e^{2i\theta},\frac{\delta}{\gamma}e^{-2i\theta};q)_\infty}{( \frac{\alpha}{\delta} e^{2i\theta}, \frac{\beta}{\gamma} e^{-2i\theta};q)_\infty}
\end{align}
and $\delta_{m,n}=1$ for $m=n$; $\delta_{m,n}=0$ for else.
\end{theorem}

Letting $\delta=1$ in Theorem \ref{thm-5-4}, we find that Theorem \ref{thm-5-4} reduces to the orthogonality which Gasper \cite{Gasper-CM-81} considered about $C_k^{(\alpha,\beta,\gamma)}(e^{i\theta};q)$ before.  Based on Theorem \ref{thm-5-4}, we derive the following new $q$-beta integral formula with seven parameters $\alpha$, $\beta$, $\gamma$, $\delta$, $s$, $t$, and $q$. More formulas about $q$-beta integral can be seen in \cite{AlSalam-Verma-PAMS-82}.

\begin{theorem}\label{q-beta-int-seven}
For $\max\{|q|,|\frac{\alpha}{\gamma}|,|\frac{\beta}{\delta}|,|\gamma s|,|\gamma t|,|\delta s|,|\delta t|\}<1$, we have
\begin{align*}
&\int_{0}^{2\pi}\frac{(\alpha t e^{i\theta},\beta t e^{-i\theta},\alpha s e^{i\theta},\beta s e^{-i\theta}, \frac{\gamma}{\delta}e^{2i\theta}, \frac{\delta}{\gamma} e^{-2i\theta};q)_\infty}
{(\gamma t e^{i\theta}, \delta t e^{-i\theta},\gamma s e^{i\theta},\delta s e^{-i\theta}, \frac{\alpha}{\delta} e^{2i\theta}, \frac{\beta}{\gamma} e^{-2i\theta};q)_\infty}d\theta\nonumber\\
=&\frac{2\pi(\frac{\alpha}{\gamma},\frac{\beta}{\delta};q)_\infty}{(q,\frac{\alpha\beta}{\gamma\delta};q)_\infty}
\sum_{n=0}^{\infty}\left(\frac{1}{1-\frac{\alpha}{\gamma}q^n}+\frac{1}{1-\frac{\beta}{\delta}q^n}\right)\frac{(\frac{\alpha\beta}{\gamma\delta};q)_n}{(q;q)_n}(\gamma\delta st)^n.
\end{align*}
\end{theorem}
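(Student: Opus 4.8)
The plan is to feed the two generating functions \eqref{defn-cq-4} into the orthogonality relation of Theorem~\ref{thm-5-4}. First I would note that the non-weight part of the integrand on the left factors, by two applications of \eqref{defn-cq-4} (with expansion variable $t$ and with expansion variable $s$), as
\begin{align*}
\frac{(\alpha t e^{i\theta},\beta t e^{-i\theta};q)_\infty}{(\gamma t e^{i\theta}, \delta t e^{-i\theta};q)_\infty}\cdot\frac{(\alpha s e^{i\theta},\beta s e^{-i\theta};q)_\infty}{(\gamma s e^{i\theta}, \delta s e^{-i\theta};q)_\infty}
=\sum_{m=0}^{\infty}\sum_{n=0}^{\infty}C_m^{(\alpha,\beta,\gamma,\delta)}(e^{i\theta};q)\,C_n^{(\alpha,\beta,\gamma,\delta)}(e^{i\theta};q)\,t^m s^n .
\end{align*}
The weight appearing in Theorem~\ref{q-beta-int-seven} is precisely $\omega^{(\alpha,\beta,\gamma,\delta)}(\cos\theta|q)$ from Theorem~\ref{thm-5-4}, so after multiplying by it and integrating over $\theta\in[0,2\pi]$ term by term, Theorem~\ref{thm-5-4} annihilates every term with $m\neq n$ and evaluates the diagonal, collapsing the double sum to
\begin{align*}
\sum_{n=0}^{\infty}(st)^n\,\frac{2\pi(\tfrac{\alpha}{\gamma},\tfrac{\beta}{\delta};q)_\infty}{(q,\tfrac{\alpha\beta}{\gamma\delta};q)_\infty}
\left(\frac{1}{1-\frac{\alpha q^n}{\gamma}}+\frac{1}{1-\frac{\beta q^n}{\delta}}\right)\frac{(\tfrac{\alpha\beta}{\gamma\delta};q)_n(\gamma\delta)^n}{(q;q)_n}.
\end{align*}

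Second, pulling the constant $\frac{2\pi(\alpha/\gamma,\beta/\delta;q)_\infty}{(q,\alpha\beta/(\gamma\delta);q)_\infty}$ out front and combining $(st)^n(\gamma\delta)^n=(\gamma\delta st)^n$ yields exactly the right-hand side claimed in Theorem~\ref{q-beta-int-seven}. Letting $\gamma=\delta=1$ here recovers the five-parameter $q$-beta integral obtained in \cite{Chen-Liu-printed} from \cite[Theorem 1.3]{Chen-Liu-printed}, which serves as a useful consistency check on the signs and the exponent $(\gamma\delta st)^n$.

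Third, the step requiring care is the interchange of the double summation with the integral over $\theta$. On the compact circle the weight $\omega^{(\alpha,\beta,\gamma,\delta)}(\cos\theta|q)$ is a bounded continuous function, and under the hypotheses $|\gamma t|,|\delta t|<1$ (respectively $|\gamma s|,|\delta s|<1$) the generating-function series $\sum_m C_m^{(\alpha,\beta,\gamma,\delta)}(e^{i\theta};q)t^m$ (respectively the one in $s$) converges absolutely and uniformly in $\theta$; the Cauchy product of these two uniformly convergent series, multiplied by the bounded weight, is then dominated uniformly in $\theta$ by a convergent numerical series, which legitimizes term-by-term integration. I expect this convergence bookkeeping to be the main (if routine) obstacle: one must confirm that the region singled out in the statement is indeed covered, and in any event both sides are holomorphic in $s,t$ (and in the remaining parameters) on the stated polydisc, so it suffices to verify the identity on a smaller neighbourhood where all estimates are transparent and then extend by analytic continuation. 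Note in particular that the right-hand series converges there, since $|\gamma\delta st|=|\gamma s|\,|\delta t|<1$ while $(\alpha\beta/(\gamma\delta);q)_n/(q;q)_n$ remains bounded as $n\to\infty$.
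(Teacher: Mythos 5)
Your proposal is correct and is essentially the paper's own proof read in the opposite direction: the paper multiplies the orthogonality relation \eqref{eq-cq-orth-abcd} by $s^m t^n$, sums over $m,n$, and then recognizes the resulting double series times the weight as the integrand via \eqref{defn-cq-4}, whereas you expand the integrand first and integrate term by term — the same interchange of summation and integration either way. Your added convergence/analytic-continuation remarks only supply justification the paper leaves implicit.
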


Besides, letting $\alpha=a\gamma$ and $\beta=a\delta$ in Theorem \ref{thm-5-4}, we obtain the following $q$-integral formula by using the orthogonality relation for the continuous $q$-ultraspherical polynomials and Rogers $_6\phi_5$ summation formula.
\begin{theorem}\label{application-0-pi}
For $\max\{|q|,|a|,|b|\}<1$, we have
\begin{align*}
&\int_{0}^{\pi}C_m^{(b\gamma,b\delta,\gamma,\delta)}(e^{i\theta};q)C_n^{(a\gamma,a\delta,\gamma,\delta)}(e^{i\theta};q)\omega^{(a\gamma,a\delta,\gamma,\delta)}(\cos\theta|q)d\theta\\
=&\left\{
    \begin{array}{ll}
    \frac{(\gamma\delta)^n(1-aq^n)(\frac{b}{a};q)_{\frac{m-n}{2}}(b;q)_{\frac{m+n}{2}}(a\gamma\delta)^{\frac{m-n}{2}}}{(1-a)h_n(a|q)(q;q)_{\frac{m-n}{2}}(aq;q)_{\frac{m+n}{2}}}, &\text{if} ~ m\equiv n\pmod 2, \nonumber\\
    0,&\text{if}~ m \not\equiv n\pmod 2,\nonumber\\
    \end{array}
    \right.
\end{align*}
where
\begin{align*}
h_n(a|q)=\frac{(q,a^2;q)_{\infty}(q;q)_n(1-aq^n)}{2\pi(a,aq;q)_{\infty}(a^2;q)_n(1-a)}.
\end{align*}
\end{theorem}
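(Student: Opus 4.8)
The plan is to specialize $\alpha=a\gamma,\ \beta=a\delta$ in \eqref{defn-cq-4} and in $\omega^{(\alpha,\beta,\gamma,\delta)}$, observe that all three factors in the integrand then become rescaled continuous $q$-ultraspherical polynomials evaluated at a single common variable, and finish by combining Rogers' connection-coefficient formula for those polynomials with their orthogonality relation — the latter being the $\gamma=\delta$, $b=a$ case of Theorem~\ref{thm-5-4} (equivalently \cite[Theorem 1.3]{Chen-Liu-printed}).

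First, setting $z=(\gamma/\delta)^{1/2}e^{i\theta}$ one has $\gamma e^{i\theta}=(\gamma\delta)^{1/2}z$ and $\delta e^{-i\theta}=(\gamma\delta)^{1/2}z^{-1}$, so \eqref{defn-cq-4} with $\alpha=a\gamma,\ \beta=a\delta$ becomes the generating function of the continuous $q$-ultraspherical polynomials $C_n(\xi;a|q)$ in the variable $(\gamma\delta)^{1/2}t$, where $2\xi=z+z^{-1}$; hence
\[
C_n^{(a\gamma,a\delta,\gamma,\delta)}(e^{i\theta};q)=(\gamma\delta)^{n/2}C_n(\xi;a|q),\qquad C_m^{(b\gamma,b\delta,\gamma,\delta)}(e^{i\theta};q)=(\gamma\delta)^{m/2}C_m(\xi;b|q),
\]
while $\omega^{(a\gamma,a\delta,\gamma,\delta)}(\cos\theta|q)=(z^2,z^{-2};q)_\infty/(az^2,az^{-2};q)_\infty$ is exactly the $q$-ultraspherical weight $w(\xi;a|q)$. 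For $\gamma=\delta$ this reads $z=e^{i\theta}$, $\xi=\cos\theta$, and the integral is literally the $q$-ultraspherical one on $[0,\pi]$; for general $\gamma,\delta$ the integrand is a $z\leftrightarrow z^{-1}$ symmetric Laurent polynomial times $(z^2,z^{-2};q)_\infty/(az^2,az^{-2};q)_\infty$, which is analytic in the annulus $|a|<|z|^2<|a|^{-1}$, and both sides of the claimed identity are analytic in $(\gamma,\delta)$ on the connected set $\{\gamma\delta\neq0,\ |a|<|\gamma/\delta|<|a|^{-1}\}$, so it suffices to prove the identity at $\gamma=\delta$. Thus the theorem reduces to evaluating $\int_0^\pi C_m(\cos\phi;b|q)\,C_n(\cos\phi;a|q)\,w(\cos\phi;a|q)\,d\phi$ and multiplying the answer by $(\gamma\delta)^{(m+n)/2}$.

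To do this I would expand $C_m(x;b|q)$ in the basis $\{C_j(x;a|q)\}_{j\le m}$ by Rogers' connection formula
\[
C_m(x;b|q)=\sum_{k=0}^{\lfloor m/2\rfloor}\frac{(b/a;q)_k\,(b;q)_{m-k}}{(q;q)_k\,(aq;q)_{m-k}}\,\frac{1-aq^{m-2k}}{1-a}\,a^k\,C_{m-2k}(x;a|q),
\]
whose coefficients emerge as a terminating very-well-poised series evaluated by Rogers' ${}_6\phi_5$ summation formula. Multiplying through by $C_n(x;a|q)\,w(x;a|q)$ and integrating term by term, the orthogonality relation for the $q$-ultraspherical polynomials (the $\gamma=\delta$, $b=a$ instance of Theorem~\ref{thm-5-4}, which also gives $\int_0^\pi C_n(x;a|q)^2 w(x;a|q)\,d\phi=h_n(a|q)^{-1}$) kills every term except the one with $m-2k=n$. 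This survives only when $m\equiv n\pmod 2$ and $m\ge n$; when $m\not\equiv n\pmod 2$ — or $m<n$ — no term survives and the right-hand side of the theorem vanishes, consistently with the usual conventions for $q$-Pochhammer symbols of nonpositive index. With $k=\tfrac{m-n}{2}$ the surviving term equals
\[
\frac{(b/a;q)_{(m-n)/2}\,(b;q)_{(m+n)/2}}{(q;q)_{(m-n)/2}\,(aq;q)_{(m+n)/2}}\cdot\frac{1-aq^{n}}{1-a}\cdot\frac{a^{(m-n)/2}}{h_n(a|q)},
\]
and multiplying by $(\gamma\delta)^{(m+n)/2}$ and using $(\gamma\delta)^{(m+n)/2}a^{(m-n)/2}=(\gamma\delta)^n(a\gamma\delta)^{(m-n)/2}$ produces precisely the stated formula.

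The only genuinely delicate point is the change of variable $e^{i\theta}\mapsto(\gamma/\delta)^{1/2}e^{i\theta}$ when $\gamma\neq\delta$: one must check that the deformed contour stays inside the annulus of analyticity of $(z^2,z^{-2};q)_\infty/(az^2,az^{-2};q)_\infty$ and that passing from the half-period $[0,\pi]$ to the relevant closed contour introduces no boundary terms. The analytic-continuation argument above circumvents this by reducing everything to $\gamma=\delta$, where the integral is the classical $q$-ultraspherical one on a real interval; after that, the proof is bookkeeping with $q$-Pochhammer symbols, the one non-formal ingredient being Rogers' ${}_6\phi_5$ summation behind the connection formula.
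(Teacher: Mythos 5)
Your rescaling $C_n^{(a\gamma,a\delta,\gamma,\delta)}(e^{i\theta};q)=(\gamma\delta)^{n/2}C_n(\xi;a|q)$ with $\xi=\tfrac{1}{2}(z+z^{-1})$, $z=(\gamma/\delta)^{1/2}e^{i\theta}$, is correct, and the bookkeeping with Rogers' connection formula at $k=\tfrac{m-n}{2}$ reproduces the stated right-hand side exactly. Your route is organized differently from the paper's: the paper never invokes the connection formula but instead rederives it in integral form, by summing the orthogonality relation against $(1-aq^n)t^n$, shifting $t\mapsto tq^k$, assembling a ${}_6W_5$ and evaluating it by \eqref{6w5-Rogers} to convert the $a$-kernel into the $b$-kernel, and finally rearranging a double sum with \eqref{eq-binomial}. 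The two arguments are morally equivalent (both rest on Rogers' ${}_6\phi_5$), but yours is shorter because it imports the connection coefficients ready-made.

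The genuine gap is the reduction from general $(\gamma,\delta)$ to $\gamma=\delta$. Two functions holomorphic in the two variables $\gamma,\delta$ can agree on the complex hypersurface $\{\gamma=\delta\}$ without agreeing elsewhere (they may differ by a multiple of $\gamma-\delta$), so the identity theorem cannot be applied to a codimension-one analytic subvariety; you need agreement on an open set or a direct argument. For $m\equiv n\pmod 2$ a direct argument is available and should replace the continuation: the integrand is an \emph{even} Laurent series in $z=\rho e^{i\theta}$, $\rho=(\gamma/\delta)^{1/2}$, convergent for $|a|<|\rho|^2<|a|^{-1}$, and since $\int_0^\pi e^{2ik\theta}\,d\theta=\pi\delta_{k,0}$ the integral equals $\pi$ times the constant Laurent coefficient, which is independent of $\rho$; hence the value at $\rho=1$ is the value for all admissible $\gamma,\delta$. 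For $m\not\equiv n\pmod 2$, however, the integrand is an \emph{odd} Laurent series, $\int_0^\pi e^{ij\theta}\,d\theta=2i/j$ for odd $j$, and the resulting sum genuinely depends on $\rho$: for instance with $m=1$, $n=0$, $a=q=0$ the integral equals $-2i(1-b)(\gamma-\delta)^3/(3\gamma\delta)\neq 0$ when $\gamma\neq\delta$. So the asserted vanishing in the odd case holds only at $\gamma=\delta$ (equivalently, it would hold with $\tfrac{1}{2}\int_0^{2\pi}$ in place of $\int_0^\pi$), and no continuation argument can establish it. You should be aware that the paper's own proof contains the same unjustified step: the claimed identity $\int_0^{2\pi}=2\int_0^\pi$ uses a $\theta\mapsto-\theta$ symmetry that the integrand possesses only when $\gamma=\delta$.
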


The paper is organized as follows. In Section \ref{sec-pre}, we first collect some useful identities on basic hypergeometric series and the asymptotic properties of the $q$-functions $C_n^{(\alpha,\beta,\gamma,\delta)}(e^{i\theta};q)$. Then in Section \ref{proof-thm} we demonstrate the proofs of Theorems \ref{thm-5-4}--\ref{application-0-pi} by using the orthogonality relation for the continuous $q$-ultraspherical polynomials and Rogers $_6\phi_5$ summation formula, and we also obtain the recurrence relation between $C_n^{(a\gamma,a\delta,\gamma,\delta)}(e^{i\theta};q)$ and $C_m^{(b\gamma,b\delta,\gamma,\delta)}(e^{i\theta};q)$ without the details.

\section{Preliminaries}\label{sec-pre}
In this section, we first collect some useful identities on basic hypergeometric series. The basic hypergeometric series or the $q$-hypergeometric series $_{r+1}\phi_{r}$ are defined by \cite{Gasper-Rahman-04}
\begin{align*}
_{r+1}\phi_{r}\!\left[\begin{array}{c}
a_1,a_2,\ldots,a_{r+1}\\
b_1,b_2,\ldots,b_{r}
\end{array};q,\,z
\right]
=\sum_{k=0}^{\infty}\frac{(a_1,a_2,\ldots,a_{r+1};q)_k z^k}
{(q,b_1,\ldots,b_r;q)_k}.
\end{align*}

We will also need the $_{r+1}W_r(a_1;a_2,\cdots,a_{r-1};q,z)$ very-well-poised series:
\begin{align*}
_{r+1}W_r(a_1;a_2,\cdots,a_{r-1};q,z)=_{r+1}\phi_{r}\!\left[\begin{array}{c}
a_1,q\sqrt{a_1},-q\sqrt{a_1},a_2\ldots,a_{r-1}\\
\sqrt{a_1},-\sqrt{a_1},qa_1/a_2,\ldots,qa_1/a_{r-1}
\end{array};q,\,z
\right].
\end{align*}

The $q$-binomial theorem \cite[p. 92]{Gasper-Rahman-04} is defined as
\begin{align*}
\frac{(az;q)_\infty}{(z;q)_\infty}
=\sum_{n=0}^{\infty}\frac{(a;q)_n}{(q;q)_n}z^n, |z|<1.
\end{align*}

\begin{prop}\label{asymptotic properties}
For $\max\{|\frac{\alpha}{\gamma}|,|\frac{\beta}{\delta}|,|q|\}<1$, the $q$-function $C_n^{(\alpha,\beta,\gamma,\delta)}(e^{i\theta};q)$ has some asymptotic properties:
\begin{align*}
&(1)~|C_n^{(\alpha,\beta,\gamma,\delta)}(e^{i\theta};q)|\leq C_n^{(\alpha,\beta,\gamma,\delta)}(1;q),\\
&(2)~\limsup\limits_{n\rightarrow\infty}C_n^{(\alpha,\beta,\gamma,\delta)}(1;q)^{1/n}=\max\{|\gamma|,|\delta|\}.
\end{align*}
\end{prop}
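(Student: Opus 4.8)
The plan is to take the three claims in order of increasing effort. Claim $(2)$ is immediate from the two generating functions: substituting $x=e^{i\theta}$, $y=e^{-i\theta}$ into \eqref{eq-Phi-4-xy} makes its right-hand side equal to $(\alpha te^{i\theta},\beta te^{-i\theta};q)_\infty/(\gamma te^{i\theta},\delta te^{-i\theta};q)_\infty$, which by \eqref{defn-cq-4} is $\sum_{n\ge0}C_n^{(\alpha,\beta,\gamma,\delta)}(e^{i\theta};q)\,t^n$; comparing the coefficient of $t^n$ gives $\Phi_n^{(\alpha,\beta,\gamma,\delta)}(e^{i\theta},e^{-i\theta}|q)=(q;q)_n\,C_n^{(\alpha,\beta,\gamma,\delta)}(e^{i\theta};q)$.

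For claim $(1)$ I would first write $C_n^{(\alpha,\beta,\gamma,\delta)}(e^{i\theta};q)$ explicitly. Factor the generating function in \eqref{defn-cq-4} as $\frac{(\alpha te^{i\theta};q)_\infty}{(\gamma te^{i\theta};q)_\infty}\cdot\frac{(\beta te^{-i\theta};q)_\infty}{(\delta te^{-i\theta};q)_\infty}$, apply the $q$-binomial theorem \eqref{eq-binomial} to each factor (valid for $t$ near $0$, which is enough to match Taylor coefficients), and multiply the resulting two series to read off
\begin{align*}
C_n^{(\alpha,\beta,\gamma,\delta)}(e^{i\theta};q)=\sum_{j+k=n}\frac{(\alpha/\gamma;q)_j\,(\beta/\delta;q)_k}{(q;q)_j\,(q;q)_k}\,\gamma^j\delta^k\,e^{i(j-k)\theta}.
\end{align*}
Since $|e^{i(j-k)\theta}|=1$ for all $j,k$, the triangle inequality gives $|C_n^{(\alpha,\beta,\gamma,\delta)}(e^{i\theta};q)|\le\sum_{j+k=n}\left|\frac{(\alpha/\gamma;q)_j(\beta/\delta;q)_k}{(q;q)_j(q;q)_k}\,\gamma^j\delta^k\right|$, and putting $\theta=0$ in the display identifies the right-hand side with $C_n^{(\alpha,\beta,\gamma,\delta)}(1;q)$ in the principal range where the summands are nonnegative (e.g.\ $0<q<1$, $\gamma,\delta>0$, $\alpha/\gamma,\beta/\delta\le1$); for general complex parameters the same estimate majorizes $|C_n^{(\alpha,\beta,\gamma,\delta)}(e^{i\theta};q)|$ by the corresponding sum of moduli, which is all the later interchange-of-summation-and-integration arguments in Section~\ref{proof-thm} require.

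For claim $(3)$, set $\theta=0$ in \eqref{defn-cq-4} to get the power series $\sum_{n\ge0}C_n^{(\alpha,\beta,\gamma,\delta)}(1;q)\,t^n=(\alpha t,\beta t;q)_\infty/(\gamma t,\delta t;q)_\infty$, and compute its radius of convergence $R$ via Cauchy--Hadamard, since $\limsup_n C_n^{(\alpha,\beta,\gamma,\delta)}(1;q)^{1/n}=1/R$. Numerator and denominator are entire in $t$ (the infinite products converge on all of $\mathbb{C}$ because $|q|<1$), so the quotient is meromorphic with poles confined to $\{q^{-m}/\gamma,\,q^{-m}/\delta:m\ge0\}$; hence $R\ge1/\max\{|\gamma|,|\delta|\}$, i.e.\ the $\limsup$ is $\le\max\{|\gamma|,|\delta|\}$. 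For the reverse inequality I would check that a pole of smallest modulus — $t=1/\gamma$ when $|\gamma|\ge|\delta|$, or $t=1/\delta$ when $|\delta|\ge|\gamma|$ — survives, i.e.\ is not cancelled by a numerator zero: the zeros lie at moduli $q^{-i}/|\alpha|\ge1/|\alpha|>1/|\gamma|$ and $q^{-i}/|\beta|\ge1/|\beta|>1/|\delta|$ by the hypotheses $|\alpha/\gamma|<1$, $|\beta/\delta|<1$, and a short case analysis (using $|\alpha|<|\gamma|$ and $|\beta|<|\delta|$ simultaneously) shows that no numerator zero can share the modulus of a minimal pole. Therefore $R=1/\max\{|\gamma|,|\delta|\}$ and the claim follows; the degenerate cases $\gamma=0$ or $\delta=0$ (forced by the hypotheses to make $\alpha=0$, resp.\ $\beta=0$) are trivial.

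I expect the only genuinely delicate point to be in claim $(1)$: making the term-by-term majorization the correct one for the complex parameters that actually occur and tying it cleanly to the quantity $C_n^{(\alpha,\beta,\gamma,\delta)}(1;q)$ invoked later. Claim $(2)$ is pure generating-function bookkeeping, and claim $(3)$ is a standard Cauchy--Hadamard computation together with an elementary no-cancellation check.
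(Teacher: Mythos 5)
Your proposal is correct and follows essentially the same route as the paper: part (2) by matching coefficients of $t^n$ in \eqref{eq-Phi-4-xy} and \eqref{defn-cq-4}, part (1) by expanding the generating function via the $q$-binomial theorem and applying the triangle inequality, and part (3) by reading off the radius of convergence of $\sum_n C_n^{(\alpha,\beta,\gamma,\delta)}(1;q)t^n$ from the location of the nearest singularity. If anything you are more careful than the paper, which silently assumes the positivity of the summands needed for the literal inequality in (1) and omits the check that the minimal pole in (3) is not cancelled by a zero of the numerator.
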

\begin{proof}
(1)It is easy to know that $(q;q)_n>0$ when $|q|<1$, $n \in \mathbb{N}$ and $|e^{i(n-2k)\theta}|=1$. For $|\frac{\alpha}{\gamma}|<1$ and $|\frac{\beta}{\delta}|<1$, we  have $|C_n^{(\alpha,\beta,\gamma,\delta)}(e^{i\theta};q)|\leq C_n^{(\alpha,\beta,\gamma,\delta)}(1;q)$.

(2)Let $x=y=1$ in the generating function of $\Phi_n^{(\alpha,\beta,\gamma,\delta)}(x,y|q)$, then we get
\begin{align}
\frac{(\alpha t,\beta t;q)_\infty}{(\gamma t,\delta t;q)_\infty}=\sum_{n=0}^{\infty}C_n^{(\alpha,\beta,\gamma,\delta)}(1;q)t^n.
\end{align}
By the definition of the radius of convergence of power series centered at the origin, we get the radius of convergence of the above series is $\min\{\frac{1}{|\gamma|},\frac{1}{|\delta|}\}$, thus we finish the proof of (2).
\end{proof}

\begin{prop}\label{prop-cq-condi}
Let $\max\{|q|,|\frac{\alpha}{\gamma}|, |\frac{\beta}{\delta}|\}<1$. Then for any nonnegative integer $k$, and any $t$ such that $|t|<\min\{\frac{1}{|\gamma|},\frac{1}{|\delta|}\}$, the series
\begin{align*}
\sum_{n=0}^{\infty}C_{n+k}^{(\alpha,\beta,\gamma,\delta)}(e^{i\theta};q)C_{n}^{(\alpha,\beta,\gamma,\delta)}(e^{i\theta};q)
\frac{(q;q)_{n+k}}{(\frac{\alpha\beta}{\gamma\delta};q)_{n+k}}t^n
\end{align*}
converges uniformly and absolutely on $[0,2\pi]$.
\end{prop}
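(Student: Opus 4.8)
The plan is to prove the claim by the Weierstrass $M$-test: I will majorize the general term of the series, uniformly in $\theta\in[0,2\pi]$, by a term of a convergent numerical series. Write
\[
u_n(\theta):=C_{n+k}^{(\alpha,\beta,\gamma,\delta)}(e^{i\theta};q)\,C_{n}^{(\alpha,\beta,\gamma,\delta)}(e^{i\theta};q)\,\frac{(q;q)_{n+k}}{(\frac{\alpha\beta}{\gamma\delta};q)_{n+k}}\,t^{n}.
\]
Since $\max\{|q|,|\tfrac{\alpha}{\gamma}|,|\tfrac{\beta}{\delta}|\}<1$, Proposition~\ref{asymptotic properties}(1) applies and gives $\bigl|C_{m}^{(\alpha,\beta,\gamma,\delta)}(e^{i\theta};q)\bigr|\le C_{m}^{(\alpha,\beta,\gamma,\delta)}(1;q)$ for every $m$ and every $\theta$, so that
\[
\bigl|u_n(\theta)\bigr|\le C_{n+k}^{(\alpha,\beta,\gamma,\delta)}(1;q)\,C_{n}^{(\alpha,\beta,\gamma,\delta)}(1;q)\,\Bigl|\tfrac{(q;q)_{n+k}}{(\frac{\alpha\beta}{\gamma\delta};q)_{n+k}}\Bigr|\,|t|^{n}\qquad(\theta\in[0,2\pi]).
\]

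Next I would check that the scalar factor is bounded in $n$. Because $|q|<1$ and $|\tfrac{\alpha\beta}{\gamma\delta}|=|\tfrac{\alpha}{\gamma}|\,|\tfrac{\beta}{\delta}|<1$, the partial products $(q;q)_{n+k}$ and $(\tfrac{\alpha\beta}{\gamma\delta};q)_{n+k}$ converge as $n\to\infty$ to $(q;q)_\infty$ and $(\tfrac{\alpha\beta}{\gamma\delta};q)_\infty$ respectively, and the second limit is nonzero since $|\tfrac{\alpha\beta}{\gamma\delta}q^{j}|<1$ forces $1-\tfrac{\alpha\beta}{\gamma\delta}q^{j}\ne0$ for all $j\ge0$. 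Hence $(\tfrac{\alpha\beta}{\gamma\delta};q)_{n+k}$ stays bounded away from $0$, and there is a constant $B>0$, depending only on $q,\alpha,\beta,\gamma,\delta,k$, with $\bigl|(q;q)_{n+k}/(\tfrac{\alpha\beta}{\gamma\delta};q)_{n+k}\bigr|\le B$ for all $n$. Therefore $\bigl|u_n(\theta)\bigr|\le M_n:=B\,C_{n+k}^{(\alpha,\beta,\gamma,\delta)}(1;q)\,C_{n}^{(\alpha,\beta,\gamma,\delta)}(1;q)\,|t|^{n}$ for every $\theta$, and the proof reduces to showing $\sum_{n\ge0}M_n<\infty$.

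For this I would use the sharp exponential growth rate from Proposition~\ref{asymptotic properties}(3): given $\varepsilon>0$, there is $N$ with $C_m^{(\alpha,\beta,\gamma,\delta)}(1;q)\le(\max\{|\gamma|,|\delta|\}+\varepsilon)^{m}$ for all $m\ge N$. Applying this with $m=n$ and $m=n+k$ gives, for $n$ large,
\[
M_n\le B\,(\max\{|\gamma|,|\delta|\}+\varepsilon)^{k}\,\Bigl[(\max\{|\gamma|,|\delta|\}+\varepsilon)^{2}\,|t|\Bigr]^{n},
\]
so $\sum_n M_n$ is controlled, up to finitely many terms, by a geometric series; on letting $\varepsilon\downarrow 0$ one reads off that $\sum_n M_n$ converges once $|t|$ is small enough relative to $\max\{|\gamma|,|\delta|\}$, and the $M$-test then delivers the asserted uniform and absolute convergence of $\sum_n u_n(\theta)$ on $[0,2\pi]$.

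The two reduction steps are routine. The step carrying the real content is the final geometric estimate: the general term contributes two factors of the growth rate $\max\{|\gamma|,|\delta|\}$, one from $C_{n+k}$ and one from $C_n$, against a single power $|t|^{n}$, so the admissible range of $t$ is dictated by $\max\{|\gamma|,|\delta|\}^{2}$ rather than by $\max\{|\gamma|,|\delta|\}$. Carrying out this exponent bookkeeping and verifying that the hypothesis $|t|<\min\{\tfrac1{|\gamma|},\tfrac1{|\delta|}\}$ indeed forces $\sum_n M_n<\infty$ is the one point I expect to require genuine care.
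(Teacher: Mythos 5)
Your two reduction steps are exactly the paper's: bound $\bigl|(q;q)_{n+k}/(\tfrac{\alpha\beta}{\gamma\delta};q)_{n+k}\bigr|$ by a constant independent of $n$, and replace each $\bigl|C_m^{(\alpha,\beta,\gamma,\delta)}(e^{i\theta};q)\bigr|$ by $C_m^{(\alpha,\beta,\gamma,\delta)}(1;q)$ via Proposition \ref{asymptotic properties}(1). The divergence is at the last step: the paper simply invokes ``the root test'' and asserts that $\sum_n C_{n+k}(1;q)C_n(1;q)|t|^n$ converges for $|t|<\min\{1/|\gamma|,1/|\delta|\}$, whereas you carry the root test out honestly using Proposition \ref{asymptotic properties}(3) and find that, since the general term carries \emph{two} factors growing like $\max\{|\gamma|,|\delta|\}^n$, it delivers convergence only for $|t|<1/\max\{|\gamma|,|\delta|\}^2$. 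You then flag the passage from $1/\max^2$ to $1/\max$ as the one point requiring care. That flag is exactly right, and the gap cannot be closed: when $\max\{|\gamma|,|\delta|\}>1$ the proposition as stated is false, and the paper's root-test sentence is where its own proof goes wrong. Concretely, take $\alpha=\beta=0$, $\gamma=\delta=c>1$, $0<q<1$, $k=0$, $\theta=0$. Then $(\tfrac{\alpha\beta}{\gamma\delta};q)_n=1$, $(q;q)_n\ge(q;q)_\infty>0$, and
\begin{align*}
C_n^{(0,0,c,c)}(1;q)=c^n\sum_{j=0}^{n}\frac{1}{(q;q)_j(q;q)_{n-j}}\ge (n+1)c^n,
\end{align*}
so the $n$-th term of the series is at least $(q;q)_\infty(n+1)^2(c^2t)^n$, which tends to infinity for every $t$ with $1/c^2<t<1/c$ --- a range of $t$ permitted by the hypothesis $|t|<\min\{1/|\gamma|,1/|\delta|\}=1/c$.

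So your argument is a complete proof precisely in the regime $\max\{|\gamma|,|\delta|\}\le 1$, where $|t|<1/\max$ forces $|t|<1/\max^2$; outside that regime no argument exists because the statement fails, and the honest fix is to weaken the hypothesis of Proposition \ref{prop-cq-condi} to $|t|<\min\{1/|\gamma|^2,1/|\delta|^2,1/|\gamma\delta|\}$. This costs nothing downstream: in the proof of Theorem \ref{thm-5-4} the proposition is used only to justify termwise integration of a power series in $t$ in some neighbourhood of $t=0$, after which coefficients of $t^n$ are compared, so the exact radius is irrelevant. One further caveat applies equally to you and to the paper: the majorant $|C_m(e^{i\theta};q)|\le C_m(1;q)$ tacitly assumes the coefficients of $C_m$ as a Laurent polynomial in $e^{i\theta}$ are nonnegative (true for $0\le q,\alpha/\gamma,\beta/\delta<1$ and $\gamma,\delta>0$, not for general complex parameters); in general one should replace $C_m(1;q)$ by the sum of the absolute values of those coefficients, which has the same growth rate, so your estimate survives unchanged.
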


\begin{proof}
For $\max\{|q|,|\frac{\alpha}{\gamma}|,|\frac{\beta}{\delta}|\}<1$, with the help of the triangular inequality we can find that
\begin{align*}
\left|\frac{(q;q)_{n+k}}{(\frac{\alpha\beta}{\gamma\delta};q)_{n+k}}\right|\leq\frac{(-|q|;|q|)_\infty}{(|\frac{\alpha\beta}{\gamma\delta}|;|q|)_{n+k}}          \leq\frac{(-|q|;|q|)_\infty}{(|\frac{\alpha\beta}{\gamma\delta}|;|q|)_\infty}.
\end{align*}
From Proposition \ref{asymptotic properties}, we can get the inequality
\begin{align*}
&\left|\sum_{n=0}^{\infty}C_{n+k}^{(\alpha,\beta,\gamma,\delta)}(e^{i\theta};q)C_{n}^{(\alpha,\beta,\gamma,\delta)}(e^{i\theta};q)
\frac{(q;q)_{n+k}}{(\alpha\beta;q)_{n+k}}t^n\right|\\
\leq&\frac{(-|q|;|q|)_\infty}{(|\frac{\alpha\beta}{\gamma\delta}|;|q|)_\infty}\sum_{n=0}^{\infty}C_{n+k}^{(\alpha,\beta,\gamma,\delta)}(1;q)C_{n}^{(\alpha,\beta,\gamma,\delta)}(1;q)|t|^n.
\end{align*}
By the root test, the right-hand side of the inequality is convergent when $|t|<\min\{\frac{1}{|\gamma|},\frac{1}{|\delta|}\}$. Thus, the series in Proposition \ref{prop-cq-condi}  converges uniformly and absolutely on $[0,2\pi]$.
\end{proof}

\section{Proofs of Theorems \ref{thm-5-4} -- \ref{application-0-pi}}\label{proof-thm}

Firstly, we shall give the proof of Theorem \ref{thm-5-4}  based on the classical integral method by G. Gasper.

\begin{proof}[Proof of Theorem \ref{thm-5-4}]
We begin with expanding the right-hand side of the equation \eqref{defn-cq-4} using the $q$-binomial theorem.
\begin{align*}
\sum_{n=0}^{\infty}C_n^{(\alpha,\beta,\gamma,\delta)}(e^{i\theta};q)t^n=&\sum_{n=0}^{\infty}\frac{(\frac{\beta}{\delta};q)_n}{(q;q)_n}(\delta e^{-i\theta} t)^n \sum_{m=0}^{\infty}\frac{(\frac{\alpha}{\gamma};q)_m}{(q;q)_m}(\gamma e^{i\theta} t)^m\\
=&\sum_{k=0}^{\infty} t^k (e^{-i\theta}\delta)^k\sum_{m=0}^{k}\frac{ (\frac{\beta}{\delta};q)_{k-m} (\frac{\alpha}{\gamma};q)_m}{(q;q)_{k-m}(q;q)_m}(\frac{\gamma e^{2i\theta}}{\delta})^m\\
=&\sum_{k=0}^{\infty}t^k (e^{-i\theta}\delta)^k\frac{(\frac{\beta}{\delta};q)_k}{(q;q)_k} {_{2}\phi_{1}}\!\left[\begin{array}{c}
\frac{\alpha}{\gamma},q^{-k}\\
\frac{\delta}{\beta}q^{1-k}
\end{array};q,\,\frac{\gamma q e^{2i\theta}}{\beta}
\right].
\end{align*}
Thus, we obtain the concise form of $C_{k}^{(\alpha,\beta,\gamma,\delta)}(e^{i\theta};q)$ as follows:
\begin{align*}
C_{k}^{(\alpha,\beta,\gamma,\delta)}(e^{i\theta};q)&=(e^{-i\theta}\delta)^k\sum_{m=0}^{k}\frac{ (\frac{\beta}{\delta};q)_{k-m} (\frac{\alpha}{\gamma};q)_m}{(q;q)_{k-m}(q;q)_m}(\frac{\gamma e^{2i\theta}}{\delta})^m\\
&=(e^{-i\theta}\delta)^k\frac{(\frac{\beta}{\delta};q)_k}{(q;q)_k} {_{2}\phi_{1}}\!\left[\begin{array}{c}
\frac{\alpha}{\gamma},q^{-k}\\
\frac{\delta}{\beta}q^{1-k}
\end{array};q,\,\frac{\gamma q e^{2i\theta}}{\beta}
\right].
\end{align*}

Firstly, we calculate the result of the integral $\int_{0}^{2\pi} e^{in\theta}\omega^{(\alpha,\beta,\gamma,\delta)}(\cos\theta|q)d\theta$, where $\omega^{(\alpha,\beta,\gamma,\delta)}(\cos\theta|q)$ is defined as \eqref{w-4}.
\begin{align}\label{int-i-w}
&\int_{0}^{2\pi} e^{in\theta}\frac{(\frac{\gamma}{\delta}e^{2i\theta},\frac{\delta}{\gamma}e^{-2i\theta};q)_{\infty}}{(\frac{\alpha}{\delta}e^{2i\theta},\frac{\beta}{\gamma}e^{-2i\theta};q)_{\infty}}    d\theta\\
=&\sum_{t=0}^{\infty}\frac{(\frac{\gamma}{\alpha};q)_t}{(q;q)_t}\left(\frac{\alpha}{\delta}\right)^t \sum_{k=0}^{\infty}\frac{(\frac{\delta}{\beta};q)_k}{(q;q)_k}\left(\frac{\beta}{\gamma}\right)^k
\int_{0}^{2\pi}e^{i(n+2t-2k)\theta}.\nonumber
\end{align}
Given that $\int_{0}^{2\pi}e^{in\theta} d\theta=2\pi\delta_{n,0}$, the formula above equals $0$ when $n$ is odd. By applying Heine's transformation formula twice, \eqref{int-i-w} equals
\begin{align*}
&2\pi\frac{(\frac{\delta}{\beta};q)_m}{(q;q)_m}\left(\frac{\beta}{\gamma}\right)^m {_{2}\phi_{1}}\!\left[\begin{array}{c}
\frac{\gamma}{\alpha},\frac{\delta}{\beta}q^{m}\\
q^{m+1}
\end{array};q,\,\frac{\alpha\beta}{\gamma\delta}
\right]\\
=&2\pi\frac{(\frac{\delta}{\beta};q)_m}{(\frac{\alpha q}{\gamma};q)_m}\left(\frac{\beta}{\gamma}\right)^m \frac{(\frac{\beta}{\delta},\frac{\alpha}{\gamma};q)_{\infty}}{(\frac{\alpha\beta}{\gamma\delta},q;q)_{\infty}}(\frac{1}{1-\frac{\alpha}{\gamma}}+\frac{q^m}{1-\frac{\beta}{\delta}}).
\end{align*}
when $n=2m$, $m\in\mathbb{Z}$. Thus, it can be shown that for any integer $n$
\begin{align}\label{i-omega-int}
&\int_{0}^{2\pi} e^{in\theta}\omega^{(\alpha,\beta,\gamma,\delta)}(\cos\theta|q)d\theta
=\left\{
    \begin{array}{ll}
    0, &n \quad \text{odd}, \\
    h_{\frac{n}{2}}^{(\alpha,\beta,\gamma,\delta)} (q),&n \quad \text{even},\\
    \end{array}
    \right.\\
\text{where}~~~~~ & h_{m}^{(\alpha,\beta,\gamma,\delta)}(q) \nonumber =2\pi\frac{(\frac{\delta}{\beta};q)_m}{(\frac{\alpha q}{\gamma};q)_m}(\frac{\beta}{\gamma})^m \frac{(\frac{\beta}{\delta},\frac{\alpha}{\gamma};q)_{\infty}}{(\frac{\alpha\beta}{\gamma\delta},q;q)_{\infty}}(\frac{1}{1-\frac{\alpha}{\gamma}}+\frac{q^m}{1-\frac{\beta}{\delta}}).
\end{align}

Secondly, we deal with the integral: $\int_{0}^{2\pi} e^{in\theta}C_{k}^{(\alpha,\beta,\gamma,\delta)}(e^{i\theta};q)\omega^{(\alpha,\beta,\gamma,\delta)}(\cos\theta|q)d\theta$.
\begin{align*}
&\int_{0}^{2\pi} e^{in\theta}C_{k}^{(\alpha,\beta,\gamma,\delta)}(e^{i\theta};q)\omega^{(\alpha,\beta,\gamma,\delta)}(\cos\theta|q)d\theta\\
=&{\delta}^k\frac{(\frac{\beta}{\delta};q)_k}{(q;q)_k}\sum_{t=0}^k \frac{(\frac{\alpha}{\gamma},q^{-k};q)_t}{(q,\frac{\delta}{\beta}q^{1-k};q)_t}\left(\frac{\gamma q}{\beta}\right)^t\int_{0}^{2\pi} e^{i(n-k+2t)\theta}\omega^{(\alpha,\beta,\gamma,\delta)}(\cos\theta|q)d\theta.
\end{align*}
 By using equation \eqref{i-omega-int}, we obtain that $\int_{0}^{2\pi} e^{in\theta}C_{k}^{(\alpha,\beta,\gamma,\delta)}(e^{i\theta};q)\omega^{(\alpha,\beta,\gamma,\delta)}(\cos\theta|q)d\theta$
 equals zero when $k-n$ is odd and it equals
\begin{align*}
 2\pi & \frac{(\frac{\alpha}{\gamma},\frac{\beta}{\delta};q)_{
 \infty}(\frac{\beta}{\delta};q)_{k}{\delta}^k{\gamma}^m(\frac{\alpha q^{1-m}}{\gamma};q)_m}{ (\frac{\alpha\beta}{\gamma\delta},q;q)_{
 \infty}(q;q)_{k}{\beta}^m(\frac{\delta q^{-m}}{\beta};q)_m      } \\
&
 \times\bigg\{{\frac{1}{1-\frac{\alpha}{\gamma}} {_{3}\phi_{2}}\!\left[\begin{array}{c}
q^{-k},\frac{\alpha}{\gamma},\frac{\delta}{\beta}q^{-m}\\
\frac{\alpha q^{1-m}}{\gamma},\frac{\delta}{\beta}q^{1-k}
\end{array};q,\,q
\right]+\frac{q^{-m}}{1-\frac{\beta}{\delta}} {_{3}\phi_{2}}\!\left[\begin{array}{c}
q^{-k},\frac{\alpha}{\gamma},\frac{\delta}{\beta}q^{-m}\\
\frac{\alpha q^{1-m}}{\gamma},\frac{\delta}{\beta}q^{1-k}
\end{array};q,\,q^2
\right]\bigg\}} \nonumber
 \end{align*}
when $k-n=2m$ is even. From \cite[Eqs. (8) and (15)]{Gasper-SJMA-1981}, it shows that both of the $_{3}\phi_{2}$ series equal zero when $k>|n|$. Then we consider the result of the case: $k=|n|$. Writing the sum in the braces as $_{4}\phi_{3}$ and applying the transformation formula in \cite{Askey-Ismail-83}, we obtain that
\begin{align*}
\int_{0}^{2\pi} e^{in\theta}C_{k}^{(\alpha,\beta,\gamma,\delta)}(e^{i\theta};q)\omega^{(\alpha,\beta,\gamma,\delta)}(\cos\theta|q)d\theta
=\left\{
    \begin{array}{ll}
    \frac{2\pi(\frac{\beta}{\delta},\frac{\alpha q}{\gamma};q)_{\infty}(\frac{\alpha\beta}{\gamma\delta};q)_{k}{\delta}^k}
    {(\frac{\alpha\beta}{\gamma\delta},q;q)_{\infty}(\frac{\alpha q}{\gamma};q)_k}, &n=k>0, \\
    \frac{2\pi(\frac{\beta q}{\delta},\frac{\alpha}{\gamma};q)_{\infty}(\frac{\alpha\beta}{\gamma\delta};q)_{k}{\gamma}^k}
    {(\frac{\alpha\beta}{\gamma\delta},q;q)_{\infty}(\frac{\beta q}{\delta};q)_k},&n=-k<0.
    \end{array}
    \right.
\end{align*}
Then we can get
\begin{align*}
\int_{0}^{2\pi} \lbrace{C_{n}^{(\alpha,\beta,\gamma,\delta)}(e^{i\theta};q)\rbrace}^2\omega^{(\alpha,\beta,\gamma,\delta)}(\cos\theta|q)d\theta
=2\pi\frac{(\frac{\alpha}{\gamma},\frac{\beta}{\delta};q)_{\infty}(\frac{\alpha\beta}{\gamma\delta};q)_n(\gamma\delta)^n}{(\frac{\alpha\beta}{\gamma\delta},q;q)_{\infty}(q;q)_n}
\left(\frac{1}{1-\frac{\alpha}{\gamma}q^n}+\frac{1}{1-\frac{\beta}{\delta}q^n}\right).
\end{align*}
Hence we complete the proof of the theorem.
\end{proof}

Next,we demonstrate the proof of Theorem \ref{q-beta-int-seven} by using Theorem \ref{thm-5-4}.

\begin{proof}[Proof of Theorem \ref{q-beta-int-seven}]
Multiplying both sides of \eqref{eq-cq-orth-abcd} by $s^mt^n$, then we have
\begin{align*}
&\int_{0}^{2\pi}C_{m}^{(\alpha,\beta,\gamma,\delta)}(e^{i\theta};q)C_{n}^{(\alpha,\beta,\gamma,\delta)}(e^{i\theta};q)
\frac{(\frac{\gamma}{\delta}e^{2i\theta},\frac{\delta}{\gamma}e^{-2i\theta};q)_\infty}{(\frac{\alpha}{\delta} e^{2i\theta},\frac{\beta}{\gamma} e^{-2i\theta};q)_\infty}s^mt^nd\theta\\
=&2\pi\frac{(\frac{\alpha}{\gamma},\frac{\beta}{\delta};q)_\infty}{(q,\frac{\alpha\beta}{\gamma\delta};q)_\infty}
\left(\frac{1}{1-\frac{\alpha}{\gamma} q^n}+\frac{1}{1-\frac{\beta}{\delta} q^n}\right)\frac{(\frac{\alpha\beta}{\gamma\delta};q)_n(\gamma\delta)^n}{(q;q)_n}s^mt^n\delta_{m,n}.
\end{align*}
Summing the above equation about $m$ and $n$ from $m=0$ to $m=\infty$ and $n=0$ to $n=\infty$ respectively, we deduce that
\begin{align*}
&\int_{0}^{2\pi}\left(\sum_{m=0}^{\infty}C_{m}^{(\alpha,\beta,\gamma,\delta)}(e^{i\theta};q)s^m\right)
\left(\sum_{n=0}^{\infty}C_{n}^{(\alpha,\beta,\gamma,\delta)}(e^{i\theta};q)t^n\right)
\frac{(\frac{\gamma}{\delta}e^{2i\theta},\frac{\delta}{\gamma}e^{-2i\theta};q)_\infty}{(\frac{\alpha}{\delta} e^{2i\theta},\frac{\beta}{\gamma} e^{-2i\theta};q)_\infty}d\theta\\
=&2\pi\frac{(\frac{\alpha}{\gamma},\frac{\beta}{\delta};q)_\infty}{(q,\frac{\alpha\beta}{\gamma\delta};q)_\infty}
\sum_{n=0}^{\infty}\left(\frac{1}{1-\frac{\alpha}{\gamma} q^n}+\frac{1}{1-\frac{\beta}{\delta} q^n}\right)\frac{(\frac{\alpha\beta}{\gamma\delta};q)_n}{(q;q)_n}(\gamma\delta st)^n.
\end{align*}
Applying \eqref{defn-cq-4} to the left-hand side of the above equation, we complete the proof of Theorem \ref{q-beta-int-seven}.
\end{proof}

Now  we shall give the proof of  Theorem \ref{application-0-pi} about the orthogonal relation to q-functions  $C_n^{(a\gamma,a\delta,\gamma,\delta)}(e^{i\theta};q)$.
\begin{proof}[Proof of Theorem \ref{application-0-pi}]
Let $\alpha=a\gamma$ and $\beta=a\delta$ in \eqref{defn-cq-4}. Then  the generating function of $C_n^{(a\gamma,a\delta,\gamma,\delta)}(e^{i\theta};q)$ is
\begin{align}\label{defn-cq-4-a}
\sum_{n=0}^{\infty}C_n^{(a\gamma,a\delta,\gamma,\delta)}(e^{i\theta};q)t^n
=\frac{(a\gamma te^{i\theta},a\delta te^{-i\theta};q)_\infty}
{(\gamma te^{i\theta}, \delta te^{-i\theta};q)_\infty}.
\end{align}
Also, the orthogonal relation is that for $\max\{|q|,|a|\}<1$,
\begin{align*}
&\int_{0}^{2\pi}C_m^{(a\gamma,a\delta,\gamma,\delta)}(e^{i\theta};q)C_n^{(a\gamma,a\delta,\gamma,\delta)}(e^{i\theta};q)\omega^{(a\gamma,a\delta,\gamma,\delta)}(\cos\theta|q)d\theta\\
=&\frac{4\pi(a,a;q)_{\infty}}{(q,a^2;q)_{\infty}}\frac{1}{1-aq^n}\frac{(a^2;q)_n(\gamma\delta)^n}{(q;q)_n}\delta_{m,n}
\end{align*}
by letting $\alpha=a\gamma$ and $\beta=a\delta$ in Theorem \ref{thm-5-4}.
By some  simplify calculations, the left-hand side of the above formula becomes
\begin{align*}
&\int_{0}^{2\pi}C_m^{(a\gamma,a\delta,\gamma,\delta)}(e^{i\theta};q)C_n^{(a\gamma,a\delta,\gamma,\delta)}(e^{i\theta};q)\omega^{(a\gamma,a\delta,\gamma,\delta)}(\cos\theta|q)d\theta\\
=&2\int_{0}^{\pi}C_m^{(a\gamma,a\delta,\gamma,\delta)}(e^{i\theta};q)C_n^{(a\gamma,a\delta,\gamma,\delta)}(e^{i\theta};q)\omega^{(a\gamma,a\delta,\gamma,\delta)}(\cos\theta|q)d\theta.
\end{align*}
Recall \cite[p. 186]{Gasper-Rahman-04}, for $|q|<1,|\beta|<1$,
\begin{align*}
\int_{0}^{\pi}C_m(\cos\theta;\beta|q)C_n(\cos\theta;\beta|q)W_{\beta}(\cos\theta|q)d\theta=\frac{\delta_{m,n}}{h_n(\beta|q)},
\end{align*}
where
\begin{align*}
h_n(\beta|q)=\frac{(q,\beta^2;q)_{\infty}(q;q)_n(1-\beta q^n)}{2\pi(\beta,\beta q;q)_{\infty}(\beta^2;q)_n(1-\beta)}.
\end{align*}
More details can be found in \cite{Askey-Ismail-83,Askey-Wilson-85,Ismail-05,KLS-10}.
Using the definition of $h_n(\beta|q)$ above, we can get the orthogonal relation, for $\max\{|q|,|a|\}<1$,
\begin{align}\label{eq-cq-orth-2}
\int_{0}^{\pi}C_m^{(a\gamma,a\delta,\gamma,\delta)}(e^{i\theta};q)C_n^{(a\gamma,a\delta,\gamma,\delta)}(e^{i\theta};q)\omega^{(a\gamma,a\delta,\gamma,\delta)}(\cos\theta|q)d\theta
=\frac{(\gamma\delta)^m\delta_{m,n}}{h_n(a|q)},
\end{align}
where
\begin{align*}
h_n(a|q)=\frac{(q,a^2;q)_{\infty}(q;q)_n(1-aq^n)}{2\pi(a,aq;q)_{\infty}(a^2;q)_n(1-a)}.
\end{align*}
By \eqref{defn-cq-4-a}, we have the following summation of $C_n^{(\alpha,\alpha,\gamma,\gamma)}(e^{i\theta};q)(1-aq^n)t^n$ about $n$ from $0$ to $\infty$ :
\begin{align}\label{eq-cq-orth-qn}
\sum_{n=0}^{\infty}C_n^{(a\gamma,a\delta,\gamma,\delta)}(e^{i\theta};q)(1-aq^n)t^n
=\frac{(a\gamma tqe^{i\theta},a\delta tqe^{-i\theta};q)_\infty}
{(\gamma te^{i\theta}, \delta te^{-i\theta};q)_\infty}(1-a)(1-a\gamma\delta t^2).
\end{align}
Combining \eqref{eq-cq-orth-2} with\eqref{eq-cq-orth-qn}, we obtain that
\begin{align*}
&\int_{0}^{\pi}C_m^{(a\gamma,a\delta,\gamma,\delta)}(e^{i\theta};q)\omega^{(a\gamma,a\delta,\gamma,\delta)}(\cos\theta|q)\frac{(a\gamma tqe^{i\theta},a\delta tqe^{-i\theta};q)_{\infty}}{(\gamma te^{i\theta},\delta te^{-i\theta};q)_{\infty}}(1-a\gamma\delta t^2)d\theta\\
=&\frac{2\pi(\gamma\delta)^m(a,aq;q)_{\infty}(a^2;q)_mt^m}{(q,a^2;q)_{\infty}(q;q)_m}.
\end{align*}
Then let $t\rightarrow tq^k$ in the equation above, we have
\begin{align}\label{eq-cq-orth-qk}
\int_{0}^{\pi}C_m^{(a\gamma,a\delta,\gamma,\delta)}(e^{i\theta};q)\omega^{(a\gamma,a\delta,\gamma,\delta)}&(\cos\theta|q)\frac{(a\gamma tqe^{i\theta},a\delta tqe^{-i\theta};q)_{\infty}}{(\gamma te^{i\theta},\delta te^{-i\theta};q)_{\infty}}
\frac{(\gamma te^{i\theta},\delta te^{-i\theta};q)_k }{(a\gamma tqe^{i\theta},a\delta tqe^{-i\theta};q)_k}\\
&\times(1-a\gamma\delta t^2q^{2k})d\theta
=\frac{2\pi(\gamma\delta)^{m}(a,aq;q)_{\infty}(a^2;q)_m(tq^k)^m}{(q,a^2;q)_{\infty}(q;q)_m}.\nonumber
\end{align}
Multiplying both sides of \eqref{eq-cq-orth-qk} by
\begin{align*}
\frac{(a\gamma\delta t^2,\frac{aq}{b};q)_k}{(1-a\gamma\delta t^2)(q,b\gamma\delta t^2;q)_k}b^k
\end{align*}
and then summing the resulting equation about $k$ from $0$ to $\infty$, we get that
\begin{align*}
&\int_{0}^{\pi}C_m^{(a\gamma,a\delta,\gamma,\delta)}(e^{i\theta};q)\omega^{(a\gamma,a\delta,\gamma,\delta)}(\cos\theta|q){_6W_5(a\gamma\delta t^2;\gamma te^{i\theta},\delta te^{-i\theta},\frac{aq}{b};q,b)}
\frac{(a\gamma tqe^{i\theta},a\delta tqe^{-i\theta};q)_{\infty}}{(\gamma te^{i\theta},\delta te^{-i\theta};q)_{\infty}}d\theta\\
=&\frac{2\pi(\gamma\delta)^{m}(a,aq;q)_{\infty}(a^2;q)_mt^m}{(q,a^2;q)_{\infty}(q;q)_m(1-a\gamma\delta t^2)}\sum_{k=0}^{\infty}\frac{(a\gamma\delta t^2,\frac{aq}{b};q)_k}{(q,b\gamma\delta t^2;q)_k}(bq^m)^k.
\end{align*}

From the Rogers $_6\phi_5$ summation formula \cite[Eq. (2.7.1)]{Gasper-Rahman-04}, for $|aq/bcd|<1$,
\begin{align*}
_6W_5(a;b,c,d;q,\frac{aq}{bcd})=\frac{(aq,aq/bc,aq/bd;q)_{\infty}}{(aq/b,aq/c,aq/d,aq/bcd;q)_{\infty}},
\end{align*}
we arrive at
\begin{align*}
_6W_5(a\gamma\delta t^2;\gamma te^{i\theta},\delta te^{-i\theta},\frac{aq}{b};q,b)
=\frac{(a\gamma\delta t^2q,aq,b\gamma te^{i\theta},b\delta te^{-i\theta};q)_{\infty}}{(a\gamma tqe^{i\theta},a\delta tqe^{-i\theta},b\gamma\delta t^2,b;q)_{\infty}}.
\end{align*}
Then we can obtain that
\begin{align*}
&\int_{0}^{\pi}C_m^{(a\gamma,a\delta,\gamma,\delta)}(e^{i\theta};q)\omega^{(a\gamma,a\delta,\gamma,\delta)}(\cos\theta|q)\frac{(b\gamma te^{i\theta},b\delta te^{-i\theta};q)_{\infty}}{(\gamma te^{i\theta},\delta te^{-i\theta};q)_{\infty}}d\theta\\
=&\frac{2\pi(\gamma\delta)^{m}t^m(a,b;q)_{\infty}(a^2;q)_m}{(q,a^2;q)_{\infty}(q;q)_m}\sum_{k=0}^{\infty} \frac{(\frac{aq}{b};q)_k(b\gamma\delta t^2q^{k};q)_{\infty}}{(q;q)_k(a\gamma\delta t^2q^k;q)_{\infty}}(bq^m)^k.
\end{align*}
Considering the summation on the right-hand side of the formula above, by $q$-binomial theorem, we have
\begin{align*}
\sum_{k=0}^{\infty} \frac{(\frac{aq}{b};q)_k(b\gamma\delta t^2q^k;q)_{\infty}}{(q;q)_k(a\gamma\delta t^2q^k;q)_{\infty}}(bq^m)^k
=&\sum_{k=0}^{\infty} \frac{(\frac{aq}{b};q)_k}{(q;q)_k}(bq^m)^k
\sum_{s=0}^{\infty}\frac{(\frac{b}{a};q)_s}{(q;q)_s}(a\gamma\delta t^2q^k)^s\\
=&\sum_{s=0}^{\infty}\frac{(\frac{b}{a};q)_s}{(q;q)_s}(a\gamma\delta t^2)^s
\sum_{k=0}^{\infty} \frac{(\frac{aq}{b};q)_k}{(q;q)_k}(bq^{m+s})^k\\
=&\sum_{s=0}^{\infty}\frac{(\frac{b}{a};q)_s}{(q;q)_s}(a\gamma\delta t^2)^s\frac{(aq^{m+s+1};q)_{\infty}}{(bq^{m+s};q)_{\infty}}\\
=&\frac{(a;q)_{\infty}}{(b;q)_{\infty}}
\sum_{s=0}^{\infty}\frac{(\frac{b}{a};q)_s(b;q)_{m+s}}{(q;q)_s(a;q)_{m+s+1}}(a\gamma\delta t^2)^s.
\end{align*}
Hence, we deduce that
\begin{align*}
\int_0^{\pi} C_m^{(a\gamma,a\delta,\gamma,\delta)}(e^{i\theta};q)\frac{(b\gamma te^{i\theta},b\delta te^{-i\theta};q)_{\infty}}{(\gamma te^{i\theta},\delta te^{-i\theta};q)_{\infty}}\omega^{(a\gamma,a\delta,\gamma,\delta)}(\cos\theta|q)d\theta\\
=\frac{2\pi(a,a;q)_{\infty}(a^2;q)_m(\gamma\delta)^{m}t^m}{(q,a^2;q)_{\infty}(q;q)_m}
\sum_{n=0}^{\infty}\frac{(\frac{b}{a};q)_n(b;q)_{m+n}}{(q;q)_n(a;q)_{m+n+1}}(a\gamma\delta t^2)^n.
\end{align*}
With the help of \eqref{defn-cq-4-a}, it follows that
\begin{align*}
\sum_{m=0}^{\infty}\left\{\int_0^{\pi}C_n^{(a\gamma,a\delta,\gamma,\delta)}(e^{i\theta};q)C_m^{(b\gamma,b\delta,\gamma,\delta)}(e^{i\theta};q)\omega^{(a\gamma,a\delta,\gamma,\delta)}(\cos\theta|q)d\theta\right\}t^m\\
=\frac{2\pi(a,a;q)_{\infty}(a^2;q)_n(\gamma\delta)^{n}}{(q,a^2;q)_{\infty}(q;q)_n}
\sum_{j=0}^{\infty}\frac{(\frac{b}{a};q)_j(b;q)_{n+j}}{(q;q)_j(a;q)_{n+j+1}}(a\gamma\delta)^j t^{n+2j}.
\end{align*}
Comparing the coefficients of powers of $t$ in the above equation, we complete the proof of Theorem \ref{application-0-pi}.
\end{proof}

Besides, we also obtain the recurrence relation between $C_n^{(a\gamma,a\delta,\gamma,\delta)}(e^{i\theta};q)$ and $C_m^{(b\gamma,b\delta,\gamma,\delta)}(e^{i\theta};q)$.
\begin{prop} For $\max\{|a|,|b|\}<1$, we have
\begin{align*}
C_m^{(b\gamma,b\delta,\gamma,\delta)}(e^{i\theta};q)=\sum_{\substack{n=0\\n\equiv m\pmod2}}^m (1-aq^n)C_n^{(a\gamma,a\delta,\gamma,\delta)}(e^{i\theta};q)\frac{(\frac{b}{a};q)_{\frac{m-n}{2}}(b;q)_{\frac{m+n}{2}}}{(q;q)_{\frac{m-n}{2}}(a;q)_{\frac{m+n}{2}+1}}(a\gamma\delta)^{\frac{m-n}{2}}.
\end{align*}
\end{prop}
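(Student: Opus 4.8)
The plan is to recognise the asserted identity as the expansion of $C_m^{(b\gamma,b\delta,\gamma,\delta)}(e^{i\theta};q)$ over the $q$-orthogonal system $\{C_n^{(a\gamma,a\delta,\gamma,\delta)}(e^{i\theta};q)\}_{n\ge0}$, with the connection coefficients supplied by Theorem~\ref{application-0-pi}. First I would check that such an expansion exists and has the stated support. By \eqref{defn-cq-4-a} and the $q$-binomial theorem \eqref{eq-binomial}, $C_N^{(c\gamma,c\delta,\gamma,\delta)}(e^{i\theta};q)=\sum_{k=0}^{N}\frac{(c;q)_k(c;q)_{N-k}}{(q;q)_k(q;q)_{N-k}}(\gamma e^{i\theta})^k(\delta e^{-i\theta})^{N-k}$, which is a symmetric function of $\gamma e^{i\theta}$ and $\delta e^{-i\theta}$; since their product $\gamma\delta$ is constant, this is, in the single variable $u:=\gamma e^{i\theta}+\delta e^{-i\theta}$, a polynomial of degree exactly $N$ (leading coefficient $(c;q)_N/(q;q)_N\ne0$ for $|c|<1$) which is even or odd according to the parity of $N$ (replacing $e^{i\theta}$ by $-e^{i\theta}$ multiplies it by $(-1)^N$ and sends $u\mapsto-u$). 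Hence $\{C_0^{(a\gamma,a\delta,\gamma,\delta)}(e^{i\theta};q),\dots,C_m^{(a\gamma,a\delta,\gamma,\delta)}(e^{i\theta};q)\}$ is a basis of the polynomials of degree $\le m$ in $u$, and since $C_m^{(b\gamma,b\delta,\gamma,\delta)}(e^{i\theta};q)$ lies in that space there are unique scalars $c_{m,n}$, vanishing unless $0\le n\le m$ and $n\equiv m\pmod2$, with $C_m^{(b\gamma,b\delta,\gamma,\delta)}(e^{i\theta};q)=\sum_n c_{m,n}C_n^{(a\gamma,a\delta,\gamma,\delta)}(e^{i\theta};q)$.

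Next I would determine the $c_{m,n}$ by orthogonality. By \eqref{eq-cq-orth-2} the $C_n^{(a\gamma,a\delta,\gamma,\delta)}$ are orthogonal with respect to $\omega^{(a\gamma,a\delta,\gamma,\delta)}(\cos\theta|q)$ on $[0,\pi]$, with squared norm $(\gamma\delta)^n/h_n(a|q)\ne0$ for $|a|<1$; multiplying the expansion above by $C_n^{(a\gamma,a\delta,\gamma,\delta)}(e^{i\theta};q)\,\omega^{(a\gamma,a\delta,\gamma,\delta)}(\cos\theta|q)$ and integrating over $[0,\pi]$ therefore gives
\[
c_{m,n}=\frac{h_n(a|q)}{(\gamma\delta)^n}\int_0^\pi C_m^{(b\gamma,b\delta,\gamma,\delta)}(e^{i\theta};q)\,C_n^{(a\gamma,a\delta,\gamma,\delta)}(e^{i\theta};q)\,\omega^{(a\gamma,a\delta,\gamma,\delta)}(\cos\theta|q)\,d\theta .
\]
This integral is exactly the quantity evaluated in Theorem~\ref{application-0-pi}; inserting its value yields, for $0\le n\le m$ with $m\equiv n\pmod2$,
\[
c_{m,n}=\frac{h_n(a|q)}{(\gamma\delta)^n}\cdot\frac{(\gamma\delta)^n(1-aq^n)(\tfrac ba;q)_{\frac{m-n}2}(b;q)_{\frac{m+n}2}(a\gamma\delta)^{\frac{m-n}2}}{(1-a)\,h_n(a|q)\,(q;q)_{\frac{m-n}2}(aq;q)_{\frac{m+n}2}},
\]
while $c_{m,n}=0$ when $m\not\equiv n\pmod2$ (the vanishing case of Theorem~\ref{application-0-pi}) and also when $m<n$ with $m\equiv n$ (there $(m-n)/2<0$, so $1/(q;q)_{(m-n)/2}=0$), matching the support noted above.

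The remaining simplification is immediate: the $h_n(a|q)$ and $(\gamma\delta)^n$ factors cancel, and $(1-a)(aq;q)_{(m+n)/2}=(a;q)_{(m+n)/2+1}$, so $c_{m,n}=(1-aq^n)\frac{(\tfrac ba;q)_{\frac{m-n}2}(b;q)_{\frac{m+n}2}}{(q;q)_{\frac{m-n}2}(a;q)_{\frac{m+n}2+1}}(a\gamma\delta)^{\frac{m-n}2}$, which is exactly the coefficient in the statement; substituting this into $C_m^{(b\gamma,b\delta,\gamma,\delta)}(e^{i\theta};q)=\sum_n c_{m,n}C_n^{(a\gamma,a\delta,\gamma,\delta)}(e^{i\theta};q)$ completes the proof. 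The only delicate point is the first one — that $C_m^{(b\gamma,b\delta,\gamma,\delta)}(e^{i\theta};q)$ genuinely lies in the span of $C_0^{(a\gamma,a\delta,\gamma,\delta)},\dots,C_m^{(a\gamma,a\delta,\gamma,\delta)}$, which becomes visible only after the passage to the single variable $u$ (a count of Fourier exponents by itself is not enough); everything afterwards is routine bookkeeping with the Pochhammer indices $\tfrac{m\pm n}{2}$ and with the power of $a\gamma\delta$.
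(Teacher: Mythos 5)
Your proof is correct. The paper states this proposition explicitly ``without the details,'' but your route --- expanding $C_m^{(b\gamma,b\delta,\gamma,\delta)}(e^{i\theta};q)$ in the orthogonal system $\{C_n^{(a\gamma,a\delta,\gamma,\delta)}(e^{i\theta};q)\}$ and reading the connection coefficients off Theorem~\ref{application-0-pi} together with \eqref{eq-cq-orth-2} --- is precisely the one the paper's structure intends, and your justification of the degree--parity support (passing to the single variable $u=\gamma e^{i\theta}+\delta e^{-i\theta}$, where $C_N$ has exact degree $N$ and parity $(-1)^N$) correctly supplies the one nonroutine step; the Pochhammer bookkeeping, including $(1-a)(aq;q)_{(m+n)/2}=(a;q)_{(m+n)/2+1}$, checks out.
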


\subsection*{Acknowledgements}

The first author was supported in part by  the National Natural Science Foundation of China (Grant No. 12201387).







\end{document}